\def\Tinv{\reflectbox{\rotatebox[origin=c]{180}{$T$}}}
\def\mb#1{{\mathrm{\mathbf{#1}}}}
\def\C{{\mb{C}}}
\def\T{{\mb{T}}}
\def\Cinv#1{{(\mb{C}_{#1})^{-1}}}
\def\Tinv#1{{(\mb{T}_{#1})^{-1}}}
\def\ceil#1{{\lceil{#1}\rceil}}
\newtheorem{theorem}{Theorem} 
\newtheorem{theorem*}{Theorem}
\newtheorem{claim}[theorem]{Claim}
\begin{document}

\title{\bf The unavoidable rotation systems} 

\author{Alan Arroyo}
\address{IST Austria, 3400 Klosterneuburg, Austria}
\email{\tt alanmarcelo.arroyoguevara@ist.ac.at}

\author{R.~Bruce Richter}
\address{Department of Combinatorics and Optimization. University of Waterloo. Waterloo, ON Canada N2L 3G1}
\email{\tt brichter@uwaterloo.ca}

\author{Gelasio Salazar}
\address{Instituto de F\'\i sica, Universidad Aut\'onoma de San Luis Potos\'{\i}, SLP 78000, Mexico}
\email{\tt gsalazar@ifisica.uaslp.mx}

\author{Matthew Sullivan}
\address{Department of Combinatorics and Optimization. University of Waterloo. Waterloo, ON Canada N2L 3G1}
\email{\tt m8sullivan@uwaterloo.ca}

\begin{abstract}
For each positive integer $m$, Pach, Solymosi, and T\'oth identified two canonical complete topological subgraphs $C_m$ and $T_m$, and proved that every sufficiently large topological complete graph contains $C_m$ or $T_m$ as a subgraph. We generalize this result in the setting of abstract rotation systems.
\end{abstract}

\maketitle



\section{Introduction}\label{sec:intro}

This note is motivated by a work in which Pach, Solymosi, and T\'oth identify the ``unavoidable'' complete topological graphs. We recall that a {\em topological graph} is a graph drawn in the plane, so that every pair of edges intersect each other in at most one point, which is either a common endvertex or a crossing. Two topological graphs $G=(V(G),E(G))$ and $H=(V(H),E(H))$ are {\em weakly isomorphic} if there is an incidence preserving one-to-one correspondence between $G$ and $H$ such that two edges of $G$ intersect if and only if the corresponding edges of $H$ intersect.

As a generalization of the Erd\H{o}s-Szekeres theorem on point sets in general position in the plane~\cite{es2}, Pach, Solymosi, and T\'oth~\cite{pst} proved that the {\em convex graph} $C_m$ and the {\em twisted graph} $T_m$~\cite{HM} (the obvious generalizations, for arbitrary $m$, of the graphs shown in Figure~\ref{fig:c6t6}) are the {\em unavoidable} complete topological graphs:

\begin{theorem}[{\cite[Theorem 2]{pst}}]\label{thm:main2}
For each integer $m\ge 1$, there is an integer $n$ with the following property. Every complete topological graph with at least $n$ vertices has a complete topological subgraph weakly isomorphic to $C_m$ or $T_m$.
\end{theorem}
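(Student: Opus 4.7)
My plan is to prove Theorem~\ref{thm:main2} by a Ramsey-type argument on the $4$-element subsets of the vertex set. The key starting observation is that whether two independent edges of a topological graph cross is a property of the $4$-subset of vertices formed by their endpoints, and the weak isomorphism type of any complete topological graph is fully determined by recording this binary information over all its $4$-subsets. Given a complete topological graph $G$ on $n$ vertices, I would fix an arbitrary linear ordering $v_1,\ldots,v_n$ of the vertices, and color each $4$-subset $\{v_{i_1},v_{i_2},v_{i_3},v_{i_4}\}$ with $i_1<i_2<i_3<i_4$ by the weak isomorphism type of the ordered topological $K_4$ that it induces. Since there are only boundedly many such types, the $4$-uniform hypergraph Ramsey theorem produces, for $n$ sufficiently large in terms of $m$, a subset $W$ of $m$ vertices all of whose $4$-subsets share the same type.

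The remainder of the argument is structural: classify the monochromatic color classes and show that the only ones realizable on $m\ge 5$ vertices are those weakly isomorphic to $C_m$ and $T_m$. Concretely, I would identify the class in which the only crossing in each induced $K_4$ is between the ``diagonal'' edges $v_{i_1}v_{i_3}$ and $v_{i_2}v_{i_4}$ (interleaved endpoints); this forces the induced subgraph on $W$ to have exactly the crossings of $C_m$. Similarly, the class in which the crossing in each induced $K_4$ is between $v_{i_1}v_{i_4}$ and $v_{i_2}v_{i_3}$ (nested endpoints) forces weak isomorphism with $T_m$. All other classes must be eliminated.

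The main obstacle lies in this last elimination step. The ``no crossings'' class is disposed of by the non-planarity of $K_5$. The class where the $K_4$ crossing is always between $v_{i_1}v_{i_2}$ and $v_{i_3}v_{i_4}$, together with any class encoding more than one crossing per $K_4$, must be ruled out by local topological considerations on a $K_5$ subgraph: the five overlapping $4$-subsets of any $5$-set impose strong consistency conditions on the rotations at the shared vertex and on how edges traverse small faces, and one checks by case analysis that only the two ``winning'' types extend consistently. Once this finite but delicate catalogue of the weak isomorphism types of good drawings of $K_4$ is carried out, the theorem follows at once from the Ramsey step, so the whole proof reduces to hypergraph Ramsey plus this classification.
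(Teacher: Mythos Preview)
Your proposal is correct in outline, but it takes a genuinely different route from the one the paper uses to derive Theorem~\ref{thm:main2}.

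The paper does not attack Theorem~\ref{thm:main2} directly. Instead, it proves the purely combinatorial Theorem~\ref{thm:main} about \emph{abstract} rotation systems: by iterative refinement (Claims~\ref{cla:sieve0} and~\ref{cla:sieve1}) one passes to a large subsystem that is separated, then to one that is forward and backward monotone, using the Erd\H{o}s--Szekeres theorem on monotone subsequences at each step; the four monotone types are exactly $\mb{C}_m,\mb{T}_m,(\mb{C}_m)^{-1},(\mb{T}_m)^{-1}$. Theorem~\ref{thm:main2} is then read off in the concluding remarks, via the fact that equality (or inversion) of rotation systems forces weak isomorphism. By contrast, you work entirely inside the topological drawing: you colour $4$-subsets by the ordered crossing type of the induced $K_4$, invoke $4$-uniform Ramsey, and then eliminate the non-convex, non-twisted colours by a local analysis on $K_5$. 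The trade-offs are clear. The paper's argument is more general---it applies to rotation systems that are not realizable---and is order-theoretic rather than topological, but its iterated refinements give only a multiply exponential bound. Your approach is more direct and self-contained for Theorem~\ref{thm:main2}, at the cost of a tower-type bound coming from hypergraph Ramsey (note that this is still worse than the $2^{m^8}$ of \cite{pst}, who do something sharper than either argument).

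One remark on your elimination step: the classes ``more than one crossing per $K_4$'' are in fact empty, since every simple topological $K_4$ has at most one crossing; so only the planar class and the ``separated'' class (crossing pair $v_{i_1}v_{i_2},v_{i_3}v_{i_4}$) need to be killed. The planar case dies by $K_5$, as you say. For the separated class on $\{1,\dots,5\}$, the four edges $14,15,24,25$ cross nothing, so they bound a disk with $1,4,2,5$ on its boundary in that cyclic order; the diagonals $12$ and $45$ lie inside and cross once, and $45$ separates $1$ from $2$ inside the disk. Then $13$ crosses $45$ exactly once, forcing $3$ on the $2$-side, while $23$ also crosses $45$ exactly once, forcing $3$ on the $1$-side---a contradiction. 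This is precisely the ``finite but delicate'' check you allude to, and it goes through cleanly.
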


\begin{figure}[ht!]
\centering
\scalebox{0.41}{\input{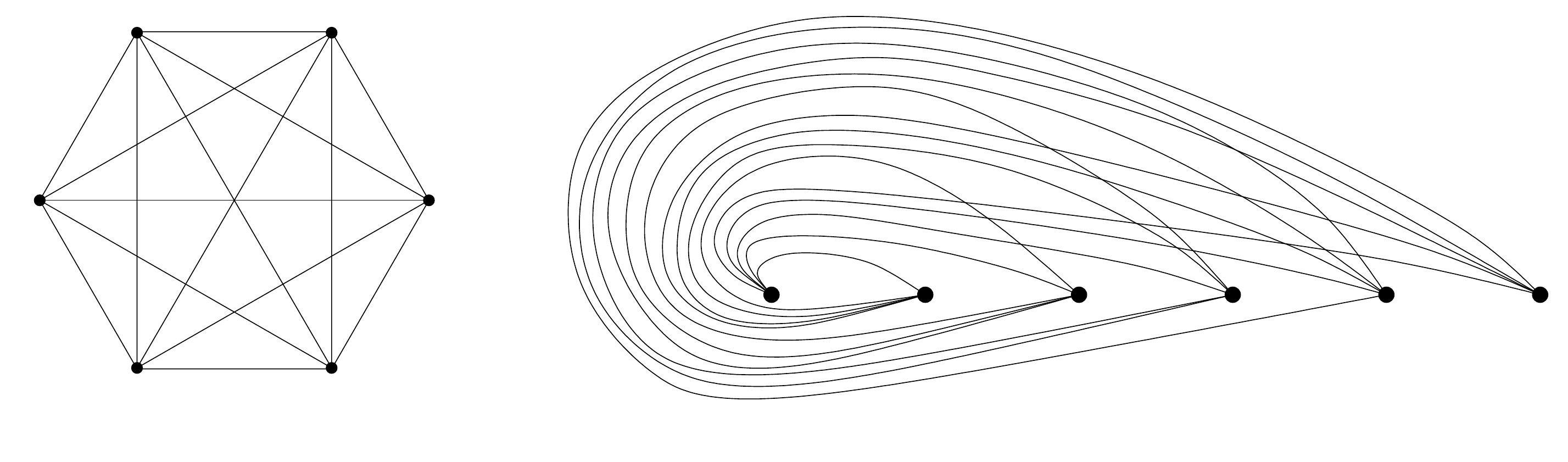_t}}
\caption{A convex complete graph $C_6$, and a twisted complete topological graph $T_6$.}
\label{fig:c6t6}
\end{figure}

In this note we show that this result still holds in the setting of (abstract) rotation systems. As we note in Section~\ref{sec:remarks}, this generalization allows us to give an alternative proof of Theorem~\ref{thm:main2}, although with a weaker bound that the one given in~\cite{pst}.


\subsection{Rotation systems}

Let $G$ be a complete topological graph whose vertices are ordered and labelled $i_1,\ldots,i_n$. The {\em rotation system} $\mb{G}$ of $G$ is the $n$-tuple $\bigl(\mb{G}(i_1),\ldots,\mb{G}(i_n)\bigr)$, where $\mb{G}(i_j)$ is the cyclic permutation of $\{i_1,\ldots,i_n\}{\setminus}\{i_j\}$ that records the clockwise cyclic order of the edges incident with vertex $i_j$.

To illustrate the notion of a rotation system, we refer the reader back to Figure~\ref{fig:c6t6}. It is straightforward to see from these illustrations that, for each integer $m\ge 2$:

\begin{itemize}
\item the vertices of $C_m$ can be labelled $1,\ldots,m$ so that the $i$-th entry of the rotation system $\mb{C}_m$ is $\mb{C}_m(i)= 1\, 2\cdots (i-1)\ (i+1)\cdots (m-1)\, m$ for $i=1,\ldots,m$; and
\item the vertices of $T_m$ can be labelled $1,\ldots,m$ so that the $i$-th entry of the rotation system $\mb{T}_m$ is $\mb{T}_m(i)=n(n-1)\cdots (i+1)\, 1\, 2 \cdots (i-1)$ for $i=1,\ldots,m$.
\end{itemize}

For each labelling of the vertices of a complete topological graph $G$ we obtain a rotation system $\mb{G}$, but the converse is not true: there exist {\em abstract} rotation systems that are not the rotation system of any complete topological graph.

An {\em abstract rotation system} $\Pi$ on an ordered set $\{i_1,\ldots,i_n\}$ is an $n$-tuple $\bigl(\Pi(i_1),\ldots,\Pi(i_n)\bigr)$, where $\Pi(i_j)$ is a cyclic permutation of $\{i_1,\ldots,i_n\}{\setminus}\{i_j\}$ for $j{=}1,\ldots,n$~\cite{kyncl1}. We say that $\{i_1,\ldots,i_n\}$ is the {\em ground set} of $\Pi$, and for convenience without loss of generality we implicitly assume that $i_1,\ldots,i_n$ (the {\em ground elements} of $\Pi$) are integers such that $1\le i_1 < \cdots < i_n$. 

An abstract rotation system $\Pi$ is {\em realizable} if there is a complete topological graph $G$ such that $\mb{G}=\Pi$. For instance, it is easy to check that the abstract rotation system $\bigl( 2 3 4, 1 3 4, 1 2 4,  1 3 2\bigr)$ is not realizable. In what follows, whenever we speak of a rotation system $\Pi$ we implicitly mean that $\Pi$ is an abstract rotation system. 

In order to state our main result, let us recall three basic notions on rotation systems. Two rotation systems $\Pi$ and $\Pi'$ are {\em equivalent} if there is a relabelling of the ground elements of $\Pi'$ that turns $\Pi'$ into $\Pi$. If $\Pi=\bigl(\Pi(i_1),\ldots,\Pi(i_n)\bigr)$ is a rotation system, then $\Pi^{-1}$ denotes its {\em inverse} rotation system $\bigl((\Pi(i_1))^{-1},$ $\ldots,(\Pi(i_n))^{-1}\bigr)$, where $(\Pi(i_j))^{-1}$ denotes the inverse of the cyclic permutation $\Pi(i_j)$, for $j=1,\ldots,n$. Finally, given a rotation system $\Pi$, every subcollection of the ground set of $\Pi$ naturally inherits from $\Pi$ a rotation system $\Pi'$, which is a {\em rotation subsystem} of $\Pi$.





\subsection{Our main result}

The setting of abstract rotation systems is much wider than the setting of realizable rotation systems (that is, the setting of complete topological graphs): the probability that a random abstract rotation system of size $n$ is realizable goes to $0$ as $n$ goes to infinity. Our main result is that Theorem~\ref{thm:main2} still holds in this abstract setting, and $\mb{C}_m, \mb{T}_m, (\mb{C}_m)^{-1}$, or $(\mb{T}_m)^{-1}$ are precisely the unavoidable rotation systems:



\begin{theorem}\label{thm:main}
For each integer $m\ge 1$ there is an integer $n_0(m)$ such that every rotation system of size at least $n_0(m)$ has a subsystem equivalent to $\mb{C}_m, \mb{T}_m, (\mb{C}_m)^{-1}$, or $(\mb{T}_m)^{-1}$.
\end{theorem}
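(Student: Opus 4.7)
My plan is a two-step argument. First I use Ramsey's theorem for 4-uniform hypergraphs to find a large subset on which all 4-element subsystems have a common ``type,'' and then I carry out a rigidity analysis showing that such uniform subsets must realise one of the four canonical rotation systems.

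For the first step, I assign to each 4-subset $S=\{a<b<c<d\}$ of the ground set a \emph{type} $\sigma(S)=(\sigma_1,\sigma_2,\sigma_3,\sigma_4)\in\{+,-\}^4$, where $\sigma_k=+$ precisely if the cyclic order induced by $\Pi$ on the three elements of $S$ other than the $k$-th one agrees with the cyclic order induced by the linear order $<$, and $\sigma_k=-$ otherwise. Since there are only $16$ types, Ramsey's theorem for 4-uniform hypergraphs with 16 colors yields a function $n_0(m)$ such that every rotation system $\Pi$ on at least $n_0(m)$ ground elements contains a subset $M$ of size $\max(m,7)$ all of whose 4-subsets share a common type $\sigma$.

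The second step is a rigidity lemma: if $|M|\ge 7$ and every 4-subset of $M$ has type $\sigma$, then, under the order-preserving bijection $M\to\{1,\ldots,|M|\}$, the induced subsystem $\Pi|_M$ coincides with one of $\mb{C}_{|M|}$, $\mb{T}_{|M|}$, $(\mb{C}_{|M|})^{-1}$, $(\mb{T}_{|M|})^{-1}$ according as $\sigma=(+,+,+,+)$, $(-,-,+,+)$, $(-,-,-,-)$, or $(+,+,-,-)$. To prove this, fix $b\in M$ with at least three elements of $M$ below it and three above it, and let $L$ and $U$ denote the elements of $M$ smaller and greater than $b$. The type constraints translate as follows: $\sigma_1$ forces $\Pi(b)|_U$ to be the natural or the reverse cyclic order on $U$, and likewise $\sigma_4$ determines $\Pi(b)|_L$; meanwhile $\sigma_2$ prescribes the cyclic order of $(\ell,u_i,u_j)$ in $\Pi(b)$ for every $\ell\in L$ and $u_i<u_j$ in $U$, and symmetrically $\sigma_3$ controls triples $(\ell_i,\ell_j,u)$. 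A short arc-intersection argument---comparing the constraints obtained from different pairs $u_i,u_j\in U$ against a single $\ell\in L$---shows that $\sigma_1\neq\sigma_2$ forces $\ell$ into a collection of pairwise disjoint arcs of $\Pi(b)$ and is therefore impossible when $|U|\ge 3$; an analogous argument forces $\sigma_3=\sigma_4$. Each of the four surviving types then pins down $\Pi(b)$ uniquely as the rotation in which $L$ and $U$ form two consecutive arcs with their prescribed internal orders, and this is exactly the rotation at $b$ in the corresponding canonical system. Since this determines $\Pi(b)$ for every sufficiently central vertex of $M$, and the remaining rotations are forced by the constraints coming from $\sigma_1$ and $\sigma_4$ alone, $\Pi|_M$ globally agrees with one of $\mb{C}_{|M|}, \mb{T}_{|M|}, (\mb{C}_{|M|})^{-1}, (\mb{T}_{|M|})^{-1}$, and in particular contains a subsystem equivalent to the corresponding size-$m$ canonical system.

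The main obstacle is this rigidity lemma: one must rule out twelve of the sixteen candidate sign patterns and simultaneously identify the four survivors with the expected canonical systems. The crux is the arc-intersection observation above, which relies on having at least one vertex $b$ with $|L|,|U|\ge 3$, and this is what forces the monochromatic subset $M$ to be taken of size at least $7$; the Ramsey-theoretic bound on $n_0(m)$ is consequently of tower type, weaker than the bound of Pach, Solymosi, and T\'oth in the topological setting.
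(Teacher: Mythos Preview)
Your approach is correct and genuinely different from the paper's. The paper does not invoke hypergraph Ramsey; instead it runs an iterative ``sieve.'' First it passes to a large subsystem that is \emph{separated} (at every ground element the smaller and the larger ground elements occupy two consecutive cyclic arcs), by repeatedly fixing one new element and retaining a long consecutive substring of its rotation. Then, within a separated subsystem, it applies the Erd\H{o}s--Szekeres theorem at each newly fixed element to pass to a subsystem that is forward monotone, and once more to get one that is also backward monotone; a final pigeonhole on the four combinations of forward/backward increasing/decreasing produces $\mb{C}_m$, $\mb{T}_m$, $(\mb{C}_m)^{-1}$, or $(\mb{T}_m)^{-1}$. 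Your Ramsey-plus-rigidity route compresses the three sieving passes into a single $4$-uniform Ramsey step, and replaces the final pigeonhole by the structural observation $\sigma_1=\sigma_2$, $\sigma_3=\sigma_4$. The paper's argument is more elementary (it uses only Erd\H{o}s--Szekeres, never full hypergraph Ramsey) and gives a somewhat smaller, though still multiply exponential, $n_0(m)$; your argument is conceptually cleaner in that it isolates exactly which local obstruction eliminates twelve of the sixteen candidate types. One small correction to your sketch: for the non-central vertices you still need $\sigma_2$ and $\sigma_3$, not only $\sigma_1$ and $\sigma_4$, to place the short side inside the rotation (for instance, at the second-smallest vertex the single lower element is located via a $\sigma_2$ constraint); this is an easy fix and does not affect the argument.
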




\section{Proof of Theorem~\ref{thm:main}}

Let $\Pi$ be a rotation system with ground set $\{i_1,\ldots,i_n\}$, and let $j\in\{1,\ldots,n\}$. We say that $\Pi(i_j)$ is {\em separated} if it can be written as $(\sigma\tau)$, where $\sigma$ is a permutation of $i_1\cdots i_{j-1}$ and $\tau$ is a permutation of $i_{j+1}\cdots i_n$. Suppose that $\Pi(i_j)$ is separated. We say that $\Pi(i_j)$ is {\em backward increasing} (respectively, {\em backward decreasing}) if $\sigma=i_{1}\cdots i_{j-1}$ (respectively, $\sigma=i_{j-1}\cdots i_{1}$). In either of these cases, $\Pi(i_j)$ is {\em backward monotone}. We say that $\Pi(i_j)$ is {\em forward increasing} (respectively, {\em forward decreasing}) if $\tau=i_{j+1}\cdots i_n$ (respectively, $\tau=i_n\cdots i_{j+1}$). In either of these cases, $\Pi(i_j)$ is {\em forward monotone}.

If $\Pi(i_j)$ is separated for each $j=1,\ldots,n$, then $\Pi$ is {\em separated}. We similarly extend the notions of forward increasing/decreasing, forward monotone, backward increasing/decreasing, and backward monotone, to the whole rotation system.



As we shall see shortly, Theorem~\ref{thm:main} is an easy consequence of the following two statements.

\begin{claim}\label{cla:sieve0}
For each positive integer $t$ there is an integer $n_1(t)$ with the following property. Every rotation system of size at least $n_1(t)$ has a rotation subsystem of size $t$ that is separated. 
\end{claim}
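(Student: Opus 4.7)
The plan is a Ramsey-theoretic extraction on $5$-element subsets, based on a local characterisation of separation. Suppose $S = \{s_1 < \cdots < s_t\}$ is a $t$-subset of the ground set. For the four extremal positions $j \in \{1, 2, t-1, t\}$, separation of $\Pi|_S$ at $s_j$ is automatic since one of the two sides has at most one label. For a middle vertex $s_j$ with $3 \le j \le t-2$, separation means that the labels of $S\setminus\{s_j\}$ smaller than $s_j$ form a single contiguous arc in $\Pi(s_j)|_{S\setminus\{s_j\}}$. Contiguity of one colour class in a two-coloured cyclic word is a $4$-local property: it fails iff four elements (two of each colour) appear in alternating pattern. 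Packaging this over all middle vertices, $\Pi|_S$ is separated iff, for every $5$-tuple $\{x_1 < x_2 < x_3 < x_4 < x_5\}$ in $S$, the cyclic order $\Pi(x_3)|_{\{x_1, x_2, x_4, x_5\}}$ is not alternating.

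I would then colour each $5$-subset $\{x_1 < \cdots < x_5\}$ of the ground set by the cyclic permutation $\Pi(x_3)|_{\{x_1, x_2, x_4, x_5\}}$, giving $6$ possible colours. Ramsey's theorem for $5$-uniform hypergraphs with $6$ colours yields, for $n$ sufficiently large in $t$, a monochromatic subset $S$ of size $\max(t, 6)$.

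The main obstacle is to exclude the two alternating colours $(x_1, x_4, x_2, x_5)$ and $(x_1, x_5, x_2, x_4)$ from being monochromatic on $6$ elements. Suppose the first pattern holds monochromatically on $a_1 < \cdots < a_6$; applying it to the $5$-tuples $\{a_1, a_2, a_3, a_4, a_5\}$ and $\{a_1, a_2, a_3, a_5, a_6\}$ (both with middle $a_3$) forces $\Pi(a_3)|_{\{a_1, a_2, a_4, a_5\}} = (a_1, a_4, a_2, a_5)$ and $\Pi(a_3)|_{\{a_1, a_2, a_5, a_6\}} = (a_1, a_5, a_2, a_6)$. The first places $a_5$ on the clockwise arc from $a_2$ to $a_1$ in $\Pi(a_3)$; the second places it on the opposite arc, from $a_1$ to $a_2$---a contradiction. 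The second alternating colour is ruled out by the mirror argument.

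Hence the monochromatic colour on $S$ is one of the four non-alternating cyclic orders, and by the local characterisation $\Pi|_S$ is separated. Since separation is hereditary under passing to subsets, any $t$-element subset of $S$ is a separated rotation subsystem of size $t$, completing the proof.
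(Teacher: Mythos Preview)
Your argument is correct, but it takes a genuinely different route from the paper's. The paper proceeds by an elementary greedy extraction: starting from the observation that $\Pi(i_1)$ is trivially separated, at each stage it looks at $\Pi(i_{s+1})$, where the $s$ already-retained smaller elements cut the cyclic word into at most $s$ arcs consisting of larger elements, so one arc has length at least $\lceil (n-s-1)/s \rceil$; keeping only that arc (together with $i_1,\ldots,i_{s+1}$) preserves separation at $i_1,\ldots,i_s$ and creates it at $i_{s+1}$. Iterating $t-1$ times yields a separated subsystem of size $t$ provided $n$ is roughly $t!$. Your approach instead isolates a clean structural fact---separation is a $5$-local condition, detectable on the cyclic order the median of each $5$-tuple induces on the other four---and then invokes hypergraph Ramsey with $6$ colours, ruling out the two alternating colours by a short consistency check on six points. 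The trade-off is that your proof makes the locality of separation explicit (which is a nice observation in its own right), while the paper's pigeonhole argument is more elementary and yields a single-exponential bound $n_1(t)$ rather than the tower-type bound that $5$-uniform Ramsey produces.
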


\begin{claim}\label{cla:sieve1}
For each positive integer $t$ there is an integer $n_2(t)$ with the following property. Every separated rotation system of size at least $n_2(t)$ has a rotation subsystem of size $t$ that is forward monotone, and a rotation subsystem of size $t$ that is backward monotone. 
\end{claim}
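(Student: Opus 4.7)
The plan is to apply Erd\H{o}s--Szekeres iteratively; I would carry out the forward--monotone part explicitly, and then deduce the backward--monotone part by a symmetric argument.

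Let $\Pi$ be separated on $\{i_1 < \cdots < i_n\}$, and for each $j$ write $\Pi(i_j) = \sigma_j\tau_j$, with $\sigma_j$ a linear permutation of the ground elements smaller than $i_j$ and $\tau_j$ one of those larger. I would greedily construct an increasing sequence $v_1<v_2<\cdots<v_t$ together with a nested family $A_0\supseteq A_1\supseteq\cdots\supseteq A_t$ of subsets of the ground set. Set $A_0:=\{i_1,\ldots,i_n\}$; at step $k\ge 1$, take $v_k:=\min A_{k-1}$ (so every element of $A_{k-1}\setminus\{v_k\}$ lies in the domain of $\tau_{v_k}$) and apply Erd\H{o}s--Szekeres to the sequence $\tau_{v_k}$ restricted to $A_{k-1}\setminus\{v_k\}$, obtaining a set $A_k\subseteq A_{k-1}\setminus\{v_k\}$ of size at least $\lceil\sqrt{|A_{k-1}|-1}\,\rceil$ on which $\tau_{v_k}$ is monotone in the natural order. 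Choosing $n_2(t)$ to be a large enough tower of square roots (for instance, $n_2(t)\ge 2^{2^t}$ is safe) guarantees that all $t$ iterations go through.

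The point is that the induced subsystem on $\{v_1,\ldots,v_t\}$ is then forward monotone. Restriction plainly preserves separated-ness, because the $\sigma$- and $\tau$-halves of every rotation get restricted independently to the smaller and larger elements of the chosen subset. Hence the forward part of $\Pi(v_k)$ in the induced subsystem is $\tau_{v_k}$ restricted to $\{v_{k+1},\ldots,v_t\}$; and because the $A_j$ are nested and $v_{k+1},\ldots,v_t\in A_k$, this restriction is monotone in the natural order, and therefore forward monotone in the order inherited by the subsystem.

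For the backward--monotone part, I would apply the same construction to the rotation system $\Pi'$ obtained from $\Pi$ by the order-reversing relabeling $i_j\mapsto i_{n+1-j}$ of the ground set. A short check shows this relabeling preserves separated-ness while swapping the $\sigma$- and $\tau$-halves of every rotation, so a forward--monotone subsystem of $\Pi'$ produced by the above construction corresponds to a backward--monotone subsystem of $\Pi$. I do not foresee a real obstacle: the whole argument is a routine iterated application of Erd\H{o}s--Szekeres, and the only things to verify are the two bookkeeping facts that restriction preserves separated-ness and that a monotone linear permutation remains monotone when restricted to a subset of the ground set, both of which are immediate from the definitions.
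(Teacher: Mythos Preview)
Your proposal is correct and follows essentially the same approach as the paper: both arguments process the ground elements from smallest to largest, at each step applying Erd\H{o}s--Szekeres to the forward part $\tau$ of the current rotation to extract a monotone subset, and both note that restriction preserves separatedness and forward monotonicity of the earlier rotations. The only differences are cosmetic---the paper packages the iteration as an inductive statement $(**)$ rather than tracking nested sets $A_k$, and handles the backward case by saying ``totally analogous'' rather than via your explicit order-reversing relabeling.
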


\begin{proof}[Proof of Theorem~\ref{thm:main}]
Let $m\ge 1$ be an integer. Let $\Pi$ be a rotation system of size at least $n_1(n_2(n_2(4m)))$, where $n_1$ and $n_2$ are as in Claims~\ref{cla:sieve0} and~\ref{cla:sieve1}. By these claims, $\Pi$ has a rotation subsystem $\Pi'$ of size $4m$ that is separated, forward monotone, and backward monotone. By the pigeon-hole principle, $\Pi'$ has a rotation subsystem $\Pi''$ of size $m$ that is separated and either (i) forward increasing and backward increasing; or (ii) forward decreasing and backward increasing; or (iii) forward decreasing and backward decreasing; or (iv) forward decreasing and backward increasing. These four possibilities correspond to $\Pi''$ being equivalent to $\C_m,\T_m,\Cinv{m}$, and $\Tinv{m}$, respectively.
\end{proof}

\begin{proof}[Proof of Claim~\ref{cla:sieve0}]
{Let $t$ be a positive integer. As we shall argue shortly, the claim is an easy consequence of the following.}

\medskip
\noindent ($*$)\,\, \emph{Let $\Pi$ be a rotation system with ground set $\{i_1,\ldots,i_n\}$. Suppose that $\Pi(i_1),\ldots,\Pi(i_s)$ are separated, for some $s\ge 1$. Then $\Pi$ has a subsystem $\Pi'$ of size ${(s{+}1){+}\ceil{(n{-}(s{+}1))/s}}$ that includes $i_1,\ldots,i_s,i_{s+1}$, and $\Pi'(i_1),\ldots,\Pi'(i_{s+1})$ are separated.}

\medskip

{To see that the claim follows from ($*$), let $\Pi$ be a rotation system with ground set $\{j_1,\ldots,j_n\}$. Since $j_1<j_k$ for each $k=2,\ldots,n$, evidently $\Pi(j_1)$ is separated. If $n$ is sufficiently large compared to $t$, then we can iteratively apply ($*$) $t-1$ times, and end up with a rotation subsystem of $\Pi$, of size $t$, that is separated.}



To prove ($*$), note that $\Pi(i_{s+1})$ has a substring $\sigma$ of size at least $\ceil{(n-(s+1))/s}$ all of whose entries are in $\{i_{s+2},\ldots,i_n\}$. Let $\Pi'$ be the rotation subsystem of $\Pi$ induced by $i_1,\ldots,i_s,i_{s+1}$ and the entries of $\sigma$. By construction, $\Pi'(i_{s+1})$ is separated, and the separateness of $\Pi'(i_1),\ldots,\Pi'(i_s)$ is inherited from the separateness of $\Pi(i_1),\ldots,\Pi(i_s)$.
\end{proof}

\begin{proof}[Proof of Claim~\ref{cla:sieve1}]
The claim follows by iteratively applying the following statement, and the corresponding statement for the backward monotone case, whose proof is totally analogous.

\medskip
\noindent ($**$)\,\, \emph{Let $\Pi$ be a separated rotation system with ground set $\{i_1,\ldots,i_n\}$, where $\Pi(i_1),\ldots,\Pi(i_s)$ are forward monotone, for some $s\ge 0$. Then $\Pi$ has a subsystem $\Pi'$ of size $(s{+}1){+}\ceil{\sqrt{n{-}s{-}1}}$ whose ground set includes $i_1,\ldots,i_s,i_{s+1}$, and $\Pi'(i_1),\ldots,\Pi'(i_{s+1})$ are forward monotone.}

\medskip

We emphasize the possibility that $s=0$, in which case the hypothesis that $\Pi(i_1),\ldots,\Pi(i_s)$ are forward monotone is vacuous. 

To prove ($**$) we start by noting that the assumption that $\Pi$ is separated implies that $\Pi(i_{s+1})$ can be written as $(\sigma\tau)$, where $\sigma$ is a permutation of $i_1\cdots i_s$ and $\tau$ is a permutation of $i_{s+2} \cdots i_{n}$. Since $\tau$ has size $n-s-1$, by the Erd\H{o}s-Szekeres theorem~\cite{es1} it has a {monotone subpermutation} $\tau'$ of size $\ceil{\sqrt{n-s-1}}$. Let $\Pi'$ be the rotation subsystem of $\Pi$ induced by $i_1,\ldots,i_s,i_{s+1}$ and the entries of $\tau'$. The forward monotonicity of $\Pi'(i_{s+1})$ follows since $\Pi'(i_{s+1})=(\sigma \tau')$ and $\tau'$ is monotone, and the forward monotonicity of $\Pi'(i_1),\ldots,\Pi'(i_s)$ is inherited from the forward monotonicity of $\Pi(i_1),\ldots,\Pi(i_s)$.
\end{proof}


\section{Concluding remarks}\label{sec:remarks}

Theorem~\ref{thm:main2} follows from Theorem~\ref{thm:main} and the following fact (see~\cite{kyncl0,pt}): If $\mb{G}$ and $\mb{H}$ are rotation systems of complete topological graphs $G$ and $H$, respectively, and $\mb{G}{=}\mb{H}$ or $\mb{G}{=}\mb{H}^{-1}$, then $G$ and $H$ are weakly isomorphic. However, Pach et al.~proved Theorem~\ref{thm:main2} with the bound $n=2^{m^8}$, and an explicit bound for $n$ using Theorem~\ref{thm:main} would be multiply exponential in $m$, as an explicit bound for $n_0$ in Theorem~\ref{thm:main} using our arguments is multiply exponential in $m$. What is the best bound for $n_0$ in Theorem~\ref{thm:main}? Is it singly exponential in $m$? 





\section*{Acknowledgements}

The second author was supported by NSERC Grant No.~50503-10940-500. This project has received funding from the European Union's Horizon 2020 research and innovation programme under the Marie Sk\l{}odowska-Curie grant agreement No 754411.


\begin{bibdiv}
\begin{biblist}

\bib{es1}{article}{
   author={Erd\H{o}s, P.},
   author={Szekeres, G.},
   title={A combinatorial problem in geometry},
   journal={Compositio Math.},
   volume={2},
   date={1935},
   pages={463--470},
}

\bib{es2}{article}{
   author={Erd\H{o}s, P.},
   author={Szekeres, G.},
   title={On some extremum problems in elementary geometry},
   journal={Ann. Univ. Sci. Budapest. E\"{o}tv\"{o}s Sect. Math.},
   volume={3(4)},
   date={1960/61},
   pages={53--62},
}

\bib{HM}{article}{
   author={Harborth, Heiko},
   author={Mengersen, Ingrid},
   title={Edges without crossings in drawings of complete graphs},
   journal={J. Combinatorial Theory Ser. B},
   volume={17},
   date={1974},
   pages={299--311},
}

\bib{kyncl0}{article}{
   author={Kyn\v{c}l, Jan},
   title={Enumeration of simple complete topological graphs},
   journal={European J. Combin.},
   volume={30},
   date={2009},
   number={7},
   pages={1676--1685},
}

\bib{kyncl1}{article}{
   author={Kyn\v{c}l, Jan},
   title={Improved enumeration of simple topological graphs},
   journal={Discrete Comput. Geom.},
   volume={50},
   date={2013},
   number={3},
   pages={727--770},
}

\bib{pt}{article}{
   author={Pach, J\'{a}nos},
   author={T\'{o}th, G\'{e}za},
   title={How many ways can one draw a graph?},
   journal={Combinatorica},
   volume={26},
   date={2006},
   number={5},
   pages={559--576},
}
\bib{pst}{article}{
   author={Pach, J\'{a}nos},
   author={Solymosi, J\'{o}zsef},
   author={T\'{o}th, G\'{e}za},
   title={Unavoidable configurations in complete topological graphs},
   note={U.S.-Hungarian Workshops on Discrete Geometry and Convexity
   (Budapest, 1999/Auburn, AL, 2000)},
   journal={Discrete Comput. Geom.},
   volume={30},
   date={2003},
   number={2},
   pages={311--320},
   issn={0179-5376},
}

\end{biblist}
\end{bibdiv}


\end{document}